\newtheorem{theorem}{Theorem}[section]
\newtheorem{lemma}[theorem]{Lemma}
\newtheorem{proposition}[theorem]{Proposition}
\theoremstyle{definition}
\theoremstyle{remark}
\numberwithin{equation}{section}
\newcommand{\diam}{\mathrm{diam}} 
\newcommand{\conv}{\mathrm{conv}}
\newcommand{\N}{\mathbb{N}}
\newcommand{\X}{\mathrm{X}}
\newcommand{\B}{\mathbf{B}}
\renewcommand{\S}{\mathbf{S}}
\newcommand{\U}{\mathcal{U}}
\renewcommand{\j}{\jmath}
\begin{document}

\title[Duality in strengths of geometric Banach space properties]{Uniform-to-proper duality of geometric properties of Banach spaces and their ultrapowers}

\begin{abstract}
In this note various geometric properties of a Banach space $\X$ are characterized by means of weaker corresponding geometric properties involving an 
ultrapower $\X^\mathcal{U}$. The characterizations do not depend on the particular choice of the free ultrafilter $\mathcal{U}$. 
For example, a point $x\in \S_\X$ is an MLUR point if and only if $\j (x)$ (given by the canonical inclusion $\j\colon \X \to \X^\mathcal{U}$) in 
$\B_{\X^\mathcal{U}}$ is an extreme point; a point $x\in \S_\X$ is LUR if and only if $\j(x)$ is not contained in any non-degenerate line segment of 
$\S_{\X^\mathcal{U}}$; a Banach space $\X$ is URED if and only if there are no $x,y \in \S_{\X^\mathcal{U}}$, $x\neq y$, with $x-y \in \j(\X)$.

\end{abstract}

\author{Jarno Talponen}

\keywords{Ultrapower, Banach space, characterization, LUR, MLUR, extreme point}
\address{University of Eastern Finland, Institute of Mathematics,
Box 111, FI-80101 Joensuu, Finland} 
\email{talponen@iki.fi}
\subjclass[2000]{46B20, 46M07, 46B10, 03H05}
\date{\today}

\maketitle

\section{Introduction}
This note deals with a rather general principle which connects some geometric properties of Banach spaces $\X$ to corresponding properties of the ultrapowers $\X^\U$. A well-known correspondence of this sort is the following: A Banach space $\X$ is superreflexive if and only if its ultrapower $\X^\U$ is reflexive. It is also known that a Banach space is uniformly convex if and only if its ultrapower is strictly convex. 
This is an example of a uniform-to-proper type duality mentioned in the title. Ultrafilters are more generally often used in turning `asymptotic properties' of objects to `sharp properties' of some limit objects. 
For example, F\o lner sequences in connection with amenable groups and 
$\L$os' theorem in model theory and algebra (see \cite{Folner},\cite{Los}) are such tools.

The above Banach space examples are global in a sense and here we will consider the transformation of local properties (such as a point being LUR or Fr\'echet smooth) to the corresponding local properties of the ultrapower. Recall that there is a canonical inclusion $\X \hookrightarrow \X^\U$, so that we may regard each point of $\X$ as an element of $\X^\U$ and it makes sense to 
analyze the geometry around these embedded points. It turns out that in many cases there is a clean if and only if relationship between properties of $x\in \S_\X$ and the same point 
considered in $\S_{\X^\U}$; see the abstract for examples. Some of the results here are likely folklore known to the specialists of extreme structures in Banach spaces.

Our conclusion here is that some of the geometric properties of a Banach space can even be expressed in a more attractive, concise way by means of a property involving its ultrapower instead 
(e.g. URED norm). Also recall that there are higher smoothness and rotundity properties (see e.g. \cite{sullivan}) which are expressed as corresponding properties of the bidual. 
For example, a point $x\in \S_\X$ is very smooth by definition if $x \in \X \subset \X^{**}$ is a smooth point in the bidual.  
Recall that there are linear isometric embeddings $\X \hookrightarrow \X^{**} \hookrightarrow \X^\U$, $\X \hookrightarrow \X^\U$, which commute (see \cite{Heinrich}).
Consequently, many of the geometric properties which are in force in the ultrapower $\X^\U$, are also valid in the bidual $\X^{**}$. 
Here is an illustrative example of the $\X \hookrightarrow \X^{**} \hookrightarrow \X^\U$ scale of properties: A point $x\in \S_\X$ is MLUR if and only if 
$x \in \B_{\X^\U}$ is an extreme point (see Theorem \ref{thm: first}). However, if $x$ is an extreme point (only) when considered in the bidual, then $x\in \S_\X$ is a weak-MLUR point. 
Thus, there appears to be a kind of gap between the bidual, and, apparently, the ultrapower and the `right clean' characterization of MLUR property requires 
taking an ultrapower, instead of the bidual. The results given here involving $\X^\U$ can be regarded as results on the geometry of bidual $\X^{**}$ as well.  
In the same vein, one could define apparently stronger versions of geometric properties involving higher duals, for example defining 
$x\in \S_\X$ to be a `very very smooth' point if $x$ considered in $\S_{\X^\U}$ is a Gateaux smooth point.  
As an example in the opposite direction, a uniform version of the Daugavet property can be defined by means of ultrapowers as well, see \cite{BecerraMartin}, cf. \cite{BT}. 

\subsection{Preliminaries}
We refer to \cite{FA_book}, \cite[Ch. I]{Handbook}, \cite{Heinrich} for suitable background information and notations, see also \cite{Beauzamy}. 

Let us fix some further conventions. Here $\X$ is a real Banach space, $\B_\X$ and $\S_\X$ are its unit ball and unit sphere, respectively.
In what follows,  $\U$ is a free ultrafilter on the natural numbers. The ultrapower of $\X$ with respect to $\U$ is defined as 
$\X^\U = \ell^\infty (\X) / N_\U$ where 
\[N_\U = \{(x_n ) \in \ell^\infty (\X) \colon \lim_{n,\U} \|x_n \| =0\}.\]
We denote the coset in $\X^\U$ corresponding to a sequence $(x_n ) \subset \B_\X$ by $(x_n )^\U = (x_n ) + N_\U$.
We let $\j \colon \X \to\X^\U$ be the canonical inclusion 
\[\j(x) = (x,x,x,\ldots )^\U .\]
We will distinguish the canonical inclusion $\j^* \colon \X^\ast \to (\X^* )^\U \subset (\X^\U )^*$, in case $\X^* $ is a dual space. 
The local uniform rotundity (LUR) and midpoint local uniform rotundity (MLUR) are standard and found in \cite{FA_book}. 
If the point $x \in \S_\X$ appearing in these definitions satisfies the corresponding condition separately, then $x$ is called 
a LUR point, or an MLUR point, respectively. We will use CCB short for a closed convex bounded subset of a Banach space.

\section{Some basic relationships between geometric properties between Banach spaces and their ultrapowers}

\begin{theorem}\label{thm: first}
Let $\X$ be a Banach space.
\begin{enumerate}
\item A point $x\in \S_\X$ is MLUR if and only if $\j (x) \in \S_{\X^\U}$ is an extreme point.
\item If $\j (x) \in \S_{\X^\U}$ is a G\^{a}teaux smooth point then $x\in \S_\X$ is a Fr\'echet smooth point. Moreover, if $\X$ is superreflexive, then also the converse holds. 
\end{enumerate}
In particular, the extremity of points in $\j(\S_\X )\subset \X^\U$ does not depend on the particular choice of the ultrafilter.  
\end{theorem}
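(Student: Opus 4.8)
The plan is to prove (1) and (2) directly from the definitions together with \v{S}mulian-type criteria, and then read off the final ``in particular'' assertion as an immediate corollary of (1). The recurring technical device is the translation of the ultrafilter limits living in $\X^\U$ into ordinary sequential limits in $\X$, which is possible precisely because $\U$ is free.

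For (1), consider first the forward implication. Suppose $x$ is MLUR and that $\j(x)=\tfrac12(u+v)$ with $u=(u_n)^\U,\ v=(v_n)^\U\in\B_{\X^\U}$. Since $\|\j(x)\|=1$, the estimate $1=\|\tfrac12(u+v)\|\le\tfrac12(\|u\|+\|v\|)\le1$ forces $\|u\|=\|v\|=1$. Putting $y_n=\tfrac12(u_n-v_n)$, one has $u_n-(x+y_n)=\tfrac12(u_n+v_n)-x\to0$ along $\U$, so $\lim_{n,\U}\|x\pm y_n\|=1$. If $\lim_{n,\U}\|y_n\|=c>0$, then the set on which simultaneously $\|x\pm y_n\|$ is within $1/k$ of $1$ and $\|y_n\|>c/2$ belongs to $\U$, hence is infinite; extracting a subsequence along these sets violates the MLUR condition at $x$. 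Therefore $\lim_{n,\U}\|y_n\|=0$, i.e. $u=v=\j(x)$, and $\j(x)$ is extreme.

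For the reverse implication I argue contrapositively: if $x$ is not MLUR, pick $(y_n)\subset\X$ with $\|x\pm y_n\|\to1$ but $\|y_n\|\ge\delta>0$, normalise $x\pm y_n$ into $\B_\X$ (the correction tends to $0$ since $\|x\pm y_n\|\to1$), and let $u,v\in\B_{\X^\U}$ be the resulting cosets. Then $\tfrac12(u+v)=\j(x)$ while $\|u-v\|=\lim_{n,\U}\|2y_n\|\ge2\delta>0$, so $\j(x)$ is not extreme. The main obstacle in (1) is exactly this passage between the sequential limits of the MLUR definition and the $\U$-limits in the ultrapower; freeness of $\U$ is what lets me extract the offending subsequence.

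For (2) I would use \v{S}mulian's lemma: the norm of $\X$ is Fr\'echet differentiable at $x$ iff every $(f_n)\subset\B_{\X^*}$ with $f_n(x)\to1$ is norm-convergent, while G\^ateaux smoothness of $\j(x)$ is the uniqueness of the norming functional of $\j(x)$ in $(\X^\U)^*$. If $x$ is not Fr\'echet smooth, choose $(f_n)$ with $f_n(x)\to1$ that is not norm-Cauchy, split it into subsequences $(f_{p_k}),(f_{q_k})$ at distance $\ge\epsilon$ with $f_{p_k}(x),f_{q_k}(x)\to1$, and form $F=(f_{p_k})^\U,\ G=(f_{q_k})^\U$ viewed in $(\X^*)^\U\subset(\X^\U)^*$; both norm $\j(x)$ and $\|F-G\|=\lim_{k,\U}\|f_{p_k}-f_{q_k}\|\ge\epsilon$, so $\j(x)$ is not G\^ateaux smooth. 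For the converse I invoke superreflexivity to guarantee that $\X^\U$ is reflexive and that $(\X^\U)^*=(\X^*)^\U$, whence every norming functional of $\j(x)$ has the form $(f_n)^\U$ with $\lim_{n,\U}f_n(x)=1$; the same subsequence-extraction as in (1), now combined with \v{S}mulian, forces $\lim_{n,\U}\|f_n-f_0\|=0$ for the unique Fr\'echet derivative $f_0$, giving uniqueness. I expect the identification $(\X^\U)^*=(\X^*)^\U$, valid only in the superreflexive range, to be the delicate point that makes the converse genuinely fail otherwise. Finally, the ``in particular'' statement is immediate from (1): $\j(x)$ is extreme in $\B_{\X^\U}$ if and only if $x$ is an MLUR point of $\X$, and the latter property is formulated entirely within $\X$ with no reference to $\U$, so the extremity of $\j(x)$ cannot depend on the choice of the free ultrafilter.
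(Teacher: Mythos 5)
Your proposal is correct and takes essentially the same route as the paper: part (1) by translating between sequential limits and $\U$-limits, using freeness of $\U$ to extract subsequences from $\U$-large sets, and part (2) via the \v{S}mulyan lemma together with Heinrich's identification $(\X^*)^\U=(\X^\U)^*$ in the superreflexive case, with the ``in particular'' clause read off from (1) exactly as in the paper. The only cosmetic difference is that you prove the forward half of (1) directly using the equivalent midpoint formulation $\|x\pm y_n\|\to 1\Rightarrow y_n\to 0$ of the MLUR condition, whereas the paper argues contrapositively with two sequences in $\S_\X$ whose midpoints converge to $x$.
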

\begin{proof}
To verify the first statement first, suppose that $x\in \S_\X \subset \S_{\X^\U}$ is not an extreme point in $\S_{\X^\U}$. Then there are 
$(z_n )^\U , (y_n )^\U \in \S_{\X^U}$, $\|(z_n )^\U - (y_n )^\U\|=C>0$, such that 
\[x=(x,x,x,\ldots )^\U =\frac{(z_n )^\U + (y_n )^\U}{2} .\]
Without loss of generality we may normalize the representatives in such a way that $(z_n ) ,(y_n ) \subset \S_\X$ (e.g. by letting same vectors appear finitely multiple times in the sequence, if required).
By the basic properties of the ultrapower we observe that 
\[\lim_{n,\U} \left\|\frac{x_n + y_n}{2}-x\right\|=0,\quad \lim_{n,\U} \|x_n - y_n \| =\| (z_n )^\U - (y_n )^\U \|=C.\]
Thus we may find a subsequence $(n_k )$ such that 
\[\lim_{k\to\infty} \frac{x_{n_k} + y_{n_k}}{2}=x,\quad \lim_{k\to\infty} \|x_{n_k} - y_{n_k} \| =C.\]
Thus $x\in \S_\X$ is not an MLUR point. 

Next suppose that $x\in \S_\X$ is not an MLUR point. By the assumption there are sequences $(z_n ) ,(y_n ) \subset \S_\X$ 
such that 
\[\frac{z_n +y_n }{2} \to x,\ \|z_n -y_n \|\not\to 0,\quad n\to\infty.\]
By passing to a subsequence we may assume without loss of generality that $\|z_n - y_n \| \to C>0$ as $n\to\infty$. Then
\[\lim_{n,\U} \|z_n - y_n\|=C.\]
We observe that 
\[\|(z_n )^\U - (y_n )^\U \|=C,\quad \frac{(z_n )^\U + (y_n )^\U}{2} =(x,x,x,\ldots)^\U =\j(x)\]
where $(z_n )^\U , (y_n )^\U \in \S_{\X^\U}$. Thus $\j (x) \in \S_{\X^\U}$ is not an extreme point.
        
The second part is seen in a similar fashion by applying the Smulyan lemma. If $x\in \S_\X$ and $(f_n ) , (g_n ) \subset \S_{\X^*}$ with $f_n (x) , g_n (x) \to 1$ 
and $\|f_n - g_n \|\to C>0$ (without loss of generality, as above) as $n\to\infty$, then 
$(f_n )^\U ,(g_n )^\U \in \S_{\X^\U}$ with $(f_n )^\U [\j(x)] =  (g_n )^\U [\j(x)] =1$ and $\| (f_n )^\U - (g_n )^\U \| =C$.        
Thus $\j (x) \in    \S_{\X^\U}$ is not a G\^{a}teaux smooth point.      

Recall that $(\X^*)^\U = (\X^\U )^*$ is and only if $\X$ is superreflexive (see \cite{Heinrich}) . Thus $f,g \in \S_{\X^\U}$ with $f(\j(x))=g(\j(x))=1$ 
can be written as $f:= (f_n )^\U , g:=(g_n )^\U$ with $(f_n ), (g_n ) \subset \S_{\X^*}$ and 
\[\lim_{n,\U} f_n (x) = \lim_{n,\U} g_n (x) =1. \]
If $g\neq f$ then we can find a subsequence $(n_k )$ such that $f_{n_k} (x)\to 1$, $g_{n_k} (x)\to 1 $, $\|f_{n_k} - g_{n_k}\| \to C>0$ as $k\to\infty$, 
contradicting the Fr\'echet smoothness of $x\in\S_\X$.
\end{proof}

\begin{theorem}\label{thm: lur}
A point $x\in \S_\X$ is LUR if and only if $\j(x)$ is not included in any non-degenerate line segment of $\S_{\X^\U}$.
\end{theorem}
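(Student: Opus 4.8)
The plan is to follow the template of the proof of Theorem \ref{thm: first}, reducing the statement about line segments to a single-witness condition on the sphere of the ultrapower. The bridge is the elementary convexity fact that for $u,v \in \S_{\X^\U}$ the whole segment $[u,v]$ lies in $\S_{\X^\U}$ as soon as its midpoint has norm one, i.e. as soon as $\|u+v\|=2$. Using this I would first record the reformulation: $\j(x)$ lies on a non-degenerate line segment contained in $\S_{\X^\U}$ if and only if there is $w \in \S_{\X^\U}$ with $w \neq \j(x)$ and $\|\j(x)+w\|=2$. The implication from such a witness $w$ is immediate, since then $[\j(x),w]$ is exactly such a segment by the convexity fact. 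For the reverse, given a non-degenerate segment $[u,v]\subset \S_{\X^\U}$ through $\j(x)$, I would write $\j(x)=(1-t)u+tv$ with $t\in[0,1]$ and take $w=v$ when $t<1$ and $w=u$ when $t=1$; in either case $w\neq \j(x)$, and since every convex combination of $u,v$ again lies in $\S_{\X^\U}$, the point $\tfrac{1-t}{2}u+\tfrac{1+t}{2}v=\tfrac{1}{2}(\j(x)+w)$ has norm one, giving $\|\j(x)+w\|=2$.

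With this reformulation in hand the two directions mirror the MLUR argument. For the first direction, assuming $x\in \S_\X$ is not LUR, I would take a sequence $(x_n)\subset \S_\X$ with $\|x+x_n\|\to 2$ and, after passing to a subsequence, $\|x-x_n\|\to C>0$. Setting $w=(x_n)^\U \in \S_{\X^\U}$, the basic ultrapower identities give $\|\j(x)+w\|=\lim_{n,\U}\|x+x_n\|=2$ and $\|\j(x)-w\|=\lim_{n,\U}\|x-x_n\|=C>0$, so $w\neq \j(x)$ and $\j(x)$ lies on the non-degenerate segment $[\j(x),w]\subset \S_{\X^\U}$ (here again the convexity fact guarantees the segment is in the sphere).

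For the converse I would start from a witness $w=(x_n)^\U$ with $w\neq \j(x)$ and $\|\j(x)+w\|=2$, normalizing the representatives so that $(x_n)\subset \S_\X$ exactly as in the proof of Theorem \ref{thm: first}. Then $\lim_{n,\U}\|x+x_n\|=2$ and $\lim_{n,\U}\|x-x_n\|=\|\j(x)-w\|=C>0$, and extracting a subsequence $(n_k)$ along which both limits are attained yields $\|x+x_{n_k}\|\to 2$ while $\|x-x_{n_k}\|\to C>0$, contradicting the LUR property of $x$.

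\textbf{Main obstacle.} I expect the only non-mechanical point to be the reduction to the single-witness condition, namely correctly handling the position of $\j(x)$ on an arbitrary segment and invoking the convexity fact that a segment with unit-norm endpoints and unit-norm midpoint lies entirely in the sphere. Once that normalization is carried out, everything else runs parallel to Theorem \ref{thm: first}, including the standard passage from ultrafilter limits to genuine subsequential limits.
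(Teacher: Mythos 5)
Your proof is correct and takes essentially the same route as the paper: the same reduction of the segment condition to a single witness $w \in \S_{\X^\U}$, $w \neq \j(x)$, with $\|\j(x)+w\|=2$ (which the paper dismisses as ``clearly equivalent'' and you spell out via the convexity fact), followed by the same normalization of representatives and extraction of subsequences in both directions. No gaps.
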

\begin{proof}
The condition that $\j(x)$ is included in some non-degenerate line segment of $\S_{\X^\U}$ is clearly equivalent 
to the statement that there is $y\in \S_{\X^\U}$, $\|\j(x) -y\|=C>0$, with $\|\j (x)+y\|=2$. If $y = (y_n )^\U$, then we can extract a subsequence 
$(z_j ) = (y_{n_j})$ such that $\|x + z_j\|\to 2$ and $\|x - z_j\| \to C$. This case is clearly excluded if $x$ is a LUR point.

Suppose next that $x$ is not a LUR point. Then there is a sequence $(x_n ) \subset \S_\X$ such that $\lim_{n\to\infty} \|x + x_n \|=2$ 
but $\limsup_{n\to\infty} \|x- x_n \|=C>0$. Then we can extract a subsequence $(x_{n_j})$ such that $\lim_{j\to\infty} \|x-x_{n_j}\| = C$.
Then putting $(y_j )=  (x_{n_j})$ and $y=(y_j )^\U$ yields $y \in \S_{\X^\U}$ with $\|\j (x) + y\|=2$ and $\|\j(x)-y\|=C >0$.
\end{proof}

We say that a CCB set $C \subset \X$ is uniformly dentable if  there is a sequence of functionals $(f_n ) \subset \X^*$ with $f_n (C)\subset [-1,1]$ such that 
\[\sup_{\alpha_n \nearrow 1} \lim_{n\to\infty} \diam (S_{f_n , \alpha_n} (C))=0.\]
Clearly, if $C$ has a strongly exposed point, then it is uniformly dentable.

\begin{theorem}
If $x\in \S_X$ is a strongly exposed point, exposed by $f\in \S_{\X^*}$, then $\j(x)\in \B_{\X^\U}$ is an exposed point, 
exposed by $\j^* (f) \in \S_{(\X^*)^\U } \subset \S_{(\X^\U )^*}$. If $\B_\X$ is uniformly dentable, then 
$\B_{\X^\U}$ has an exposed point. 
\end{theorem}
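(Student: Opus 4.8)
The statement splits into two essentially independent claims, and I would treat them in turn. The first (that $\j(x)$ is exposed by $\j^*(f)$) is a direct transfer of strong exposedness through the ultrafilter limit, while the second (producing an exposed point of $\B_{\X^\U}$ from uniform dentability) is where the real work lies. Throughout I write $F$ for the exposing functional under construction, noting that $F\in(\X^*)^\U\subset(\X^\U)^*$ is automatically a functional on $\X^\U$, so no reflexivity is needed.

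For the first claim I would set $F:=\j^*(f)=(f,f,f,\ldots)^\U$, so that $\|F\|=\|f\|=1$ and $F[\j(x)]=f(x)=1$; thus $F$ attains its norm on $\B_{\X^\U}$ at $\j(x)$. To show this is the only such point, take any $\eta=(y_n)^\U\in\B_{\X^\U}$ with $F[\eta]=1$, normalizing the representatives so that $y_n\in\B_\X$ (possible since $\|\eta\|\le 1$), whence $\lim_{n,\U}f(y_n)=1$. Fix $\epsilon>0$ and let $\delta>0$ be the modulus from strong exposedness of $x$ by $f$; then $\{n:f(y_n)>1-\delta\}\in\U$ and on this set $\|y_n-x\|<\epsilon$, so $\lim_{n,\U}\|y_n-x\|\le\epsilon$. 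As $\epsilon$ is arbitrary, $\eta=\j(x)$, so $F$ exposes $\j(x)$ (indeed the same estimate shows it exposes it strongly).

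For the second claim let $(f_n)\subset\X^*$ with $f_n(\B_\X)\subset[-1,1]$ witness uniform dentability. Since the defining condition forces the slices $S_{f_n,\alpha_n}(\B_\X)$ to be eventually nonempty as $\alpha_n\nearrow 1$, I record that $\|f_n\|\to 1$ and fix near-maximizers $x_n\in\B_\X$ with $f_n(x_n)\to 1$. The key reduction is to replace the sequence-paired condition $\sup_{\alpha_n\nearrow 1}\lim_{n}\diam(S_{f_n,\alpha_n}(\B_\X))=0$ by a uniform slice estimate: for every $\epsilon>0$ there are $\delta>0$ and $N$ such that for all $n\ge N$, any $y,z\in\B_\X$ with $f_n(y),f_n(z)>1-\delta$ satisfy $\|y-z\|<\epsilon$. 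I would prove this by contraposition: a failure yields, taking $\delta=1/k$ and $N=k$, a strictly increasing sequence $n_k$ and two points of $S_{f_{n_k},1-1/k}(\B_\X)$ at distance $\ge\epsilon$; interpolating the levels $1-1/k$ into an increasing sequence $\alpha_n\nearrow 1$ with $\alpha_{n_k}=1-1/k$ then gives $\limsup_n\diam(S_{f_n,\alpha_n}(\B_\X))\ge\epsilon$, contradicting the hypothesis.

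Granted this uniform estimate, I would put $F:=(f_n)^\U$ and $\xi:=(x_n)^\U$, so that $\|F\|=\lim_{n,\U}\|f_n\|=1$ and $F[\xi]=\lim_{n,\U}f_n(x_n)=1$; in particular $\xi\in\S_{\X^\U}$ is normed by $F$. For uniqueness, given $\eta=(y_n)^\U\in\B_{\X^\U}$ with representatives in $\B_\X$ and $F[\eta]=1$, I fix $\epsilon>0$, take $\delta,N$ from the uniform estimate, and intersect the three sets $\{n:f_n(x_n)>1-\delta\}$, $\{n:f_n(y_n)>1-\delta\}$ and $\{n\ge N\}$, each of which lies in $\U$; on the intersection $\|x_n-y_n\|<\epsilon$, so $\lim_{n,\U}\|x_n-y_n\|\le\epsilon$ and hence $\eta=\xi$. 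Thus $\xi$ is exposed by $F$. I expect the main obstacle to be precisely the reduction step: extracting from the author's sequence-indexed slice condition a smallness of slices that is \emph{uniform in $n$} and therefore usable against the uncontrolled rate at which $f_n(y_n)\to 1$ along $\U$, with the monotonicity requirement $\alpha_n\nearrow 1$ and the exclusion of the degenerate case $\|f_n\|\not\to 1$ being the points that need care.
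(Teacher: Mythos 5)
Your proof is correct; note that the paper itself states this theorem with a bare \qed\ and supplies no argument at all, so your proposal fills in what the author evidently regarded as routine. Both halves check out. In the first, the normalization of representatives into $\B_\X$ (legitimate since it changes the coset by a $\U$-null perturbation) and the use of the strong-exposedness modulus on a set belonging to $\U$ are exactly right, and strong exposedness of $x$ is genuinely needed --- mere exposedness would not rule out a sequence $(y_n)$ with $f(y_n)\to 1$ but $\|y_n-x\|\geq\epsilon$, which would produce a second norming point of $\j^*(f)$ in $\B_{\X^\U}$. In the second, your uniformization step --- converting the sequence-indexed condition $\sup_{\alpha_n \nearrow 1}\lim_{n}\diam(S_{f_n,\alpha_n}(\B_\X))=0$ into: for each $\epsilon>0$ there exist $\delta>0$ and $N$ with $\diam(S_{f_n,1-\delta}(\B_\X))\leq\epsilon$ for all $n\geq N$ --- is precisely the missing ingredient needed to beat the uncontrolled rate at which $f_n(y_n)\to 1$ along $\U$, and your contrapositive construction is sound: the $n_k$ can be made strictly increasing by invoking the failure with $N=\max(k,n_{k-1}+1)$, and since lowering the level only enlarges a slice, interpolating to a (strictly, if desired) increasing $\alpha_n\nearrow 1$ with $\alpha_{n_k}\leq 1-1/k$ preserves the diameter lower bound and contradicts the hypothesis for that sequence.

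Two remarks for the write-up. First, you are right to flag the degenerate case: as literally written the author's definition is vacuous (with the usual convention $\diam(\emptyset)=0$, the zero functionals $f_n\equiv 0$ would witness uniform dentability of any ball), so your reading that the slices must be nonempty --- which forces $\|f_n\|\to 1$ and licenses the choice of near-maximizers $x_n$ --- is the only sensible one and deserves an explicit sentence. Second, your argument proves slightly more than the statement asks: the same $(\epsilon,\delta,N)$ estimate shows that any $\eta\in\B_{\X^\U}$ with $F[\eta]>1-\delta$ satisfies $\|\eta-\xi\|\leq\epsilon$, so $\xi=(x_n)^\U$ is in fact \emph{strongly} exposed by $F=(f_n)^\U$, and likewise $\j(x)$ is strongly exposed by $\j^*(f)$ in the first half.
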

\qed

\begin{theorem}
A Banach space $\X$ is URED if and only if there are no $x,y \in \S_{\X^\U}$, $x\neq y$, with $x-y \in \j(\X)$.
\end{theorem}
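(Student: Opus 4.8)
The plan is to establish both implications in contrapositive form, using only the elementary ultrapower identities $\|(x_n)^\U\|=\lim_{n,\U}\|x_n\|$ and $(x_n)^\U=\j(z)\iff\lim_{n,\U}\|x_n-z\|=0$ already exploited in the proof of Theorem~\ref{thm: first}. I work from the sequential form of the definition: $\X$ fails to be URED exactly when there are a direction $z\in\S_\X$, a constant $\epsilon>0$, and sequences $(u_n),(v_n)\subset\B_\X$ with $u_n-v_n\in\R z$, $\|u_n-v_n\|\geq\epsilon$, and $\|u_n+v_n\|\to2$.

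First I would prove ``no forbidden pair $\Rightarrow$ URED'' contrapositively, starting from such a failure of URED. The first observation is that $\|u_n+v_n\|\to2$ together with $u_n,v_n\in\B_\X$ forces $\|u_n\|\to1$ and $\|v_n\|\to1$. Writing $u_n-v_n=\lambda_n z$, the scalars obey $\epsilon\le|\lambda_n|\le2$, so $\lambda:=\lim_{n,\U}\lambda_n$ exists and is nonzero. Putting $x=(u_n)^\U$ and $y=(v_n)^\U$ then gives $\|x\|=\|y\|=1$, i.e.\ $x,y\in\S_{\X^\U}$, while $\lim_{n,\U}\|\lambda_n z-\lambda z\|=0$ yields $x-y=\j(\lambda z)\in\j(\X)\setminus\{0\}$, so $x\neq y$. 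This is precisely the forbidden configuration, and note that the flatness $\|u_n+v_n\|\to2$ was used only to push $x$ and $y$ onto the sphere.

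For the reverse implication ``URED $\Rightarrow$ no forbidden pair'' I would again argue contrapositively: given $x=(u_n)^\U$, $y=(v_n)^\U$ in $\S_{\X^\U}$ with $x\neq y$ and $x-y=\j(z)$, $z\neq0$, I would fix representatives with $\lim_{n,\U}\|u_n\|=\lim_{n,\U}\|v_n\|=1$ and $\lim_{n,\U}\|u_n-v_n-z\|=0$, so that $u_n-v_n\to z$ along $\U$ in the single nonzero direction $z$. The goal is to turn $(u_n),(v_n)$ into a sequence witnessing that $\X$ is not uniformly rotund in the direction $z$.

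The step I expect to be the main obstacle is the one with no analogue in the first implication: to invoke the failure of URED one must also produce the almost-extremal condition $\|u_n+v_n\|\to2$, equivalently $\|x+y\|=2$, whereas the hypothesis supplies only two sphere points with a genuine difference and says nothing about the midpoint $\tfrac12(x+y)$. A pair with $x-y\in\j(\X)$ need not be flat --- in a Hilbert ultrapower one may take $x=w+\tfrac12\j(z)$ and $y=w-\tfrac12\j(z)$ with $w\perp\j(z)$ and $\|w\|^2=1-\tfrac14\|z\|^2$, so that $x,y\in\S_{\X^\U}$, $x-y=\j(z)$, yet $\|x+y\|=2\|w\|<2$. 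Hence the genuine content of this direction is to argue that a witnessing pair can be taken with $\|x+y\|=2$; recovering this flatness is where I expect the difficulty (and the real hypotheses of the statement) to reside.
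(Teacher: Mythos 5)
Your proof of the forward implication is correct, and your suspicion about the reverse one is not a removable difficulty: the flatness genuinely cannot be recovered, because the theorem as printed is false. Your Hilbert-ultrapower pair already shows this, and there is an even more elementary witness: for any nonzero $\X$ take distinct $u,v\in\S_\X$; then $\j(u)\neq\j(v)$ lie in $\S_{\X^\U}$ and $\j(u)-\j(v)=\j(u-v)\in\j(\X)$. Hence the stated condition fails in \emph{every} nonzero Banach space, and the theorem as written would deny URED to every space. The paper's entire proof is the one-sentence remark that the argument is an adaptation of the proof of Theorem \ref{thm: lur}, and there the segment condition is encoded explicitly as $\|\j(x)+y\|=2$ together with $\|\j(x)-y\|=C>0$; the intended statement here must carry the same flatness clause: $\X$ is URED if and only if there are no $x,y\in\S_{\X^\U}$ with $x\neq y$, $\|x+y\|=2$ (equivalently $\conv(x,y)\subset\S_{\X^\U}$) and $x-y\in\j(\X)$. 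So what you located is a slip in the statement (present in the abstract as well), not a missing idea you were expected to supply.

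With the flatness clause restored, your plan closes exactly along the lines you set up, and your first half is precisely the adaptation the paper intends; note it even delivers $\|x+y\|=2$ for free, as you remark, so it proves the corrected statement verbatim. For the second half, flatness is now a hypothesis rather than a goal: write $x=(u_n)^\U$, $y=(v_n)^\U$ with $x-y=\j(z)$, $z\neq0$, and $\lim_{n,\U}\|u_n+v_n\|=2$. Replace $v_n$ by $v_n':=u_n-z$, so that the difference is exactly $z$, while $\lim_{n,\U}\|v_n'-v_n\|=0$ preserves $\lim_{n,\U}\|u_n+v_n'\|=2$ and gives $\lim_{n,\U}\|v_n'\|=1$; then divide both $u_n$ and $v_n'$ by $t_n:=\max(1,\|v_n'\|)$ to land in $\B_\X$ with difference $t_n^{-1}z$, where $t_n\to1$ along $\U$. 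Since a $\U$-limit of a bounded real sequence is a cluster point of it, you may extract a subsequence along which $\|t_n^{-1}(u_n+v_n')\|\to2$ and $t_n\to1$; the differences $t_n^{-1}z$ then stay in $\R z$ with norms tending to $\|z\|>0$, which is exactly a failure of URED in the direction $z$ in your sequential formulation, completing the contrapositive.
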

\begin{proof}
The argument is an adaptation of the proof of Theorem \ref{thm: lur}.
\end{proof}

The following result appears to be folklore and is included here for the sake of convenience. 
\begin{theorem}
The uniform convexity of $\X$ is equivalent to the strict convexity of $\X^\U$. In particular, all strictly convex ultrapowers are uniformly convex.
The uniform smoothness of $\X$ is equivalent to the G\^{a}teaux smoothness of $\X^\U$. In particular, all G\^{a}teaux smooth ultrapowers are uniformly smooth.
\end{theorem}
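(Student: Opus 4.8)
The plan is to handle the two equivalences separately, in each case reducing to the dictionary between the geometry of $\S_{\X^\U}$ and ultralimits of norms that already powers the proofs of Theorems \ref{thm: first} and \ref{thm: lur}. For the convexity statement I would argue both implications by contraposition, exactly in the spirit of Theorem \ref{thm: lur}. If $\X^\U$ fails to be strictly convex, pick distinct $(x_n)^\U,(y_n)^\U\in\S_{\X^\U}$ with $\|\frac{(x_n)^\U+(y_n)^\U}{2}\|=1$; after normalizing the representatives into $\S_\X$ as in the proof of Theorem \ref{thm: first}, the ultrapower identities give $\lim_{n,\U}\|x_n-y_n\|=C>0$ and $\lim_{n,\U}\|\frac{x_n+y_n}{2}\|=1$, so $\X$ violates its modulus of convexity at $\varepsilon=C/2$ and is not uniformly convex. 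Conversely, witnesses $(x_n),(y_n)\subset\S_\X$ to the failure of uniform convexity (with $\|x_n-y_n\|\geq\varepsilon$ and $\|\frac{x_n+y_n}{2}\|\to1$) produce $(x_n)^\U\neq(y_n)^\U$ in $\S_{\X^\U}$ whose midpoint has norm one, so $\X^\U$ is not strictly convex. For the ``in particular'' clause I would record the routine fact that uniform convexity passes to the ultrapower: the same ultralimit bookkeeping yields $\delta_{\X^\U}(\varepsilon)\geq\delta_\X(\varepsilon/2)$, so if $\X^\U$ is strictly convex then $\X$ is uniformly convex by the equivalence, and hence $\X^\U$ is in fact uniformly convex.

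For the smoothness statement I would pass to duals and invoke the convexity part just proved, using the classical facts that $\X$ is uniformly smooth iff $\X^*$ is uniformly convex and that strict convexity of a dual space forces G\^{a}teaux smoothness of the predual. If $\X$ is uniformly smooth then it is superreflexive, so $(\X^\U)^*=(\X^*)^\U$ (see \cite{Heinrich}); since $\X^*$ is uniformly convex, the convexity part shows that $(\X^*)^\U=(\X^\U)^*$ is strictly convex, whence $\X^\U$ is G\^{a}teaux smooth. For the converse I would again argue by contraposition. If $\X$ is not uniformly smooth, then $\X^*$ is not uniformly convex, so by the convexity part $(\X^*)^\U$ is not strictly convex; choose distinct $F=(f_n)^\U,G=(g_n)^\U\in\S_{(\X^*)^\U}$ with $\|\frac{F+G}{2}\|=1$ and write $h_n:=\frac{f_n+g_n}{2}$, so that $\lim_{n,\U}\|h_n\|=1$. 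The crucial observation is that functionals of the form $(f_n)^\U$ attain their norm on $\B_{\X^\U}$: choosing $z_n\in\S_\X$ with $h_n(z_n)\geq\|h_n\|-1/n$ and setting $z=(z_n)^\U\in\S_{\X^\U}$ gives $\frac{F(z)+G(z)}{2}=\lim_{n,\U}h_n(z_n)=1$, and since $F(z),G(z)\leq1$ this forces $F(z)=G(z)=1$. Thus $z$ carries the two distinct supporting functionals $F,G$, and $\X^\U$ is not G\^{a}teaux smooth. The last clause follows as before: a G\^{a}teaux smooth ultrapower forces $\X$ uniformly smooth, and then $\X^*$ is uniformly convex, so $(\X^\U)^*=(\X^*)^\U$ is uniformly convex by the convexity transfer, making $\X^\U$ uniformly smooth.

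The step I expect to be the main obstacle is precisely this converse smoothness direction, where in general one has only the isometric inclusion $(\X^*)^\U\subseteq(\X^\U)^*$ and cannot simply quote the identity $(\X^\U)^*=(\X^*)^\U$ (which needs superreflexivity, the very kind of reflexivity one is not entitled to assume a priori). The resolution is to exploit the norm-attainment of functionals $(f_n)^\U$ on $\B_{\X^\U}$: this lets the failure of strict convexity inside the smaller space $(\X^*)^\U$ already exhibit a point of $\S_{\X^\U}$ with two supporting functionals, so non-smoothness of $\X^\U$ is witnessed without any reflexivity hypothesis on $\X$.
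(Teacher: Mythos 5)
Your proof is correct, and there is in fact no proof in the paper to compare it against: the theorem is labelled folklore and stated with no argument, so your write-up fills a genuine gap. The convexity half is exactly the ultralimit bookkeeping the paper uses for Theorems \ref{thm: first} and \ref{thm: lur} (normalize representatives into $\S_\X$, translate between $\lim_{n,\U}$ and subsequence statements), and the transfer estimate $\delta_{\X^\U}(\varepsilon)\geq\delta_\X(\varepsilon/2)$ settles the ``in particular'' clause. The delicate point is the one you flagged yourself: in the direction ``$\X^\U$ G\^{a}teaux smooth $\Rightarrow$ $\X$ uniformly smooth'' one cannot quote $(\X^\U)^*=(\X^*)^\U$, since by \cite{Heinrich} that identity is equivalent to superreflexivity, which is precisely what is not yet available; a careless duality argument here would be circular. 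Your resolution is sound: every $F=(f_n)^\U$ in the isometric copy $(\X^*)^\U\subset(\X^\U)^*$ attains its norm on $\B_{\X^\U}$ (take $z_n\in\S_\X$ with $h_n(z_n)\geq\|h_n\|-1/n$ and pass to $(z_n)^\U$), so the failure of strict convexity of $(\X^*)^\U$ --- supplied by the convexity half applied to $\X^*$ --- produces a point $z\in\S_{\X^\U}$ supported by two functionals $F\neq G$, which remain distinct in $(\X^\U)^*$ because the inclusion is isometric; hence $\X^\U$ is not G\^{a}teaux smooth, and no reflexivity hypothesis is used. For completeness, note that this converse can also be run entirely on the primal side, avoiding the Lindenstrauss duality between $\rho_\X$ and $\delta_{\X^*}$: if $\X$ is not uniformly smooth, pick $x_n,u_n\in\S_\X$ and $t_n\to 0$ with $\|x_n+t_nu_n\|+\|x_n-t_nu_n\|\geq 2+\varepsilon t_n$, choose norming functionals $f_n$ for $x_n+t_nu_n$ and $g_n$ for $x_n-t_nu_n$, and check that $(f_n)^\U$ and $(g_n)^\U$ both support $(x_n)^\U\in\S_{\X^\U}$ while $\|(f_n)^\U-(g_n)^\U\|\geq\varepsilon$; this is closer in spirit to the Smulyan-type arguments in the proof of Theorem \ref{thm: first}, but your dual route is equally valid.
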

\qed  

As explained in the introduction, if the points of $\S_\X$ are Fr\'echet smooth regarded in $\S_{\X^\U}$, then they are also that when 
considered as elements of the bidual. It is known that if the (bi)dual of a Banach space if Fr\'echet smooth, then the space is reflexive. 
We do not know what kind of geometric properties the Fr\'echet smoothness of $\j(x)$, for all $x\in \S_\X$ implies. For instance, in the case of M-embedded space $\X$ (see e.g. 
\cite{Lima, Werner}), if any $x\in \S_\X \subset \X^{**}$ is a G\^{a}teaux smooth point, then it follows 
immediately that $X$ is reflexive.

\begin{theorem}
Let $\X$ be a superreflexive space. Then all G\^{a}teaux smooth points $x\in \S_{\X^\U}$ are in fact Fr\'echet smooth.
\end{theorem}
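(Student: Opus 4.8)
The plan is to argue by contradiction and to convert a hypothetical failure of Fréchet smoothness into the existence of two distinct norm-attaining functionals, which is exactly what Gâteaux smoothness forbids. The single structural input that makes the ultrapower setting work is the duality recalled in the proof of Theorem \ref{thm: first}: since $\X$ is superreflexive, $(\X^\U)^* = (\X^*)^\U$, so every functional on $\X^\U$ is represented by an ultrapower coset of a bounded sequence in $\X^*$. So fix a Gâteaux smooth point $x = (x_m)^\U \in \S_{\X^\U}$ (with representatives normalized, $x_m \in \S_\X$) and suppose it is not Fréchet smooth. By the Smulyan lemma there are $\delta>0$ and sequences $(f_n), (g_n) \subset \S_{(\X^\U)^*}$ with $f_n(x), g_n(x) \to 1$ and $\|f_n - g_n\| \ge \delta$ for all $n$.

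Next I would pass to representatives $f_n = (f_{n,m})_m^\U$ and $g_n = (g_{n,m})_m^\U$ with all terms normalized in $\S_{\X^*}$, as in the normalization step of Theorem \ref{thm: first}. Writing $a_n = \lim_{m,\U} f_{n,m}(x_m)$, $b_n = \lim_{m,\U} g_{n,m}(x_m)$ and $c_n = \lim_{m,\U}\|f_{n,m} - g_{n,m}\|$, the hypotheses give $a_n, b_n \to 1$ and $c_n \ge \delta$. For each $n$ the set
$$S_n = \{m\in\N : f_{n,m}(x_m) > a_n - \tfrac{1}{n},\ g_{n,m}(x_m) > b_n - \tfrac{1}{n},\ \|f_{n,m} - g_{n,m}\| > c_n - \tfrac{1}{n}\}$$
belongs to $\U$, being a finite intersection of sets in $\U$.

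The heart of the argument is a diagonalization across the two indices. Put $T_n = S_1 \cap \cdots \cap S_n \in \U$, a decreasing family, and define a selector $n(m) = \max\{n : m \in T_n\}$ (setting $n(m) = m$ if $m \in \bigcap_j T_j$, and $n(m)$ arbitrarily on the $\U$-small set $\N \setminus T_1$). Because each $T_N \in \U$ and $\U$ refines the cofinite filter, one checks that $\{m : n(m) \ge N\} \supseteq T_N \cap \{m \ge N\} \in \U$, so $n(m) \to \infty$ along $\U$; moreover $m \in S_{n(m)}$ for all $m \in T_1$. Forming the diagonal functionals $F = (f_{n(m),m})_m^\U$ and $G = (g_{n(m),m})_m^\U$, the membership $m \in S_{n(m)}$ together with $a_{n(m)}, b_{n(m)} \to 1$ yields $F(x) = \lim_{m,\U} f_{n(m),m}(x_m) = 1 = G(x)$, while $\|F - G\| = \lim_{m,\U}\|f_{n(m),m} - g_{n(m),m}\| \ge \delta$. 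Since the representatives are norm one, $\|F\| = \|G\| = 1$, so $F, G \in \S_{(\X^\U)^*}$ are two \emph{distinct} support functionals of $x$, contradicting Gâteaux smoothness.

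I expect the one genuinely delicate point to be the bookkeeping of this double-index diagonalization: verifying that the selector $n(m)$ tends to infinity along $\U$ while still keeping $m$ inside the good set $S_{n(m)}$, so that all three ultralimits defining $F(x)$, $G(x)$ and $\|F-G\|$ come out as required. Once the nested sets $T_n$ are in place this reduces to the single observation that $\U$ contains every cofinite set, and the rest is a transcription of the Smulyan-lemma computation of Theorem \ref{thm: first} from one functional pair to the diagonal pair $(F,G)$.
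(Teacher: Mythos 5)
Your proposal is correct and follows essentially the same route as the paper's own proof: superreflexivity to identify $(\X^\U)^* = (\X^*)^\U$, the Smulyan lemma to convert failure of Fr\'echet smoothness into a far-apart family of almost-norming functionals, and a diagonal selection through $\U$ of one representative functional per ultrapower coordinate, yielding a second support functional at $x$ and contradicting G\^{a}teaux smoothness. The only divergence is bookkeeping: the paper diagonalizes relative to the fixed exposing functional $f$ by a near-supremum choice of $k_n$ in each coordinate, whereas you use the nested $\U$-large sets $T_n$ with a max-selector $n(m)$ --- the same iterated-limit-exchange device the paper abstracts in the proposition immediately following this theorem.
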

\begin{proof}
According to the superreflexivity of $\X$ we have $(\X^\U)^* = (\X^* )^\U$. Let $x \in \S_{\X^\U}$ be a G\^{a}teaux smooth point and let 
$f\in (\X^\U)^* = (\X^* )^\U$, $\|f\|=1$, be the unique exposing functional. Suppose that $(f_k ) \subset \S_{(\X^\U)^* }$ is a sequence with 
$f_k (x) \to 1$. 

By using the Smulyan lemma we will argue that $x$ is a Fr\'echet smooth point, thus, assume to the contrary that 
$\limsup_{k\to\infty} \|f_k - f\|=C>0$. By passing to a subsequence we may assume that 
$\|f_k -f\| \to C$.

Write $x= (x_n )^\U$, $f = (f_n )_{n}^\U$ and $f_k = (f_{k,n})_{n}^\U$ for each $k\in\N$ where $(x_n )\subset \S_{\X}$, $f_n , f_{k,n} \in \S_{\X^*}$. 
Note that 
\[\left\{n\in\N \colon \exists k\in\N\ \|f_{k,n} - f_n\|>\frac{C}{2}\right\}\in \U.\]
Define $g=(g_n )^\U$ as follows: outside the above set we set $g_n =0$. For each $n\in\N$ in the above set we choose 
$k_n \in\N$ such that 
\[ \|f_{k_n ,n} - f_n\|>\frac{C}{2},\]
\[f_{k_n , n}(x_n )> \sup \left\{f_{k,n} (x_n ) \colon k\in\N\ \mathrm{such\ that}\  \|f_{k,n} - f_n\|>\frac{C}{2}\right\} -\frac{1}{n} \] 
and we set $g_n = f_{k_n ,n}$. Clearly $g\in \S_{(\X^\U)^*}$ and $\|g- f\| \geq \frac{C}{2}$ by the construction of $g$.
Given $\varepsilon>0 $, by studying $f_k$ such that $\|f_k -f\| > \frac{C}{2}$ and $f_k (x) > 1- \varepsilon$ we conclude that 
$g(x)> 1- \varepsilon$. It follows that $g(x)=1$. Now the uniqueness of the exposing functional of $x$ fails and this provides us with a contradiction.
\end{proof}

By modifying the previous proof one can show an abstract version of the principle appearing above.
\begin{proposition}
Let $K$ be a  first-countable Hausdorff compact space. Suppose that $I$ and $J$ are index sets and that $\mathcal{F}$ is a filter on $J$ and 
$\mathcal{U}$ is a countably incomplete ultrafilter on $I$. Let $x, x_{i,j} \in K$ for all $i\in I$, $j\in J$. Assume that 
\[\lim_{j,\mathcal{F}} \lim_{i,\U} x_{i,j} = x \]
exists. Then there is a mapping $j \colon I \to J$ such that 
\[\lim_{i,\U} x_{i, j(i)} = x.\] 
\end{proposition}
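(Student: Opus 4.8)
The plan is to run a diagonal argument driven by the countable incompleteness of $\U$, using first-countability to reduce convergence at $x$ to a single decreasing neighborhood basis. First I would record the two standing inputs. Since $K$ is compact Hausdorff, the inner ultralimit $y_j := \lim_{i,\U} x_{i,j}$ exists and is unique for every $j$, so the hypothesis reads $\lim_{j,\mathcal{F}} y_j = x$. Using first-countability at $x$, fix a decreasing sequence of open neighborhoods $U_1 \supseteq U_2 \supseteq \cdots$ forming a neighborhood basis of $x$.

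Next I would extract, for each level $m$, a witness index $j_m$ together with a $\U$-large set of $i$'s. Because $\lim_{j,\mathcal{F}} y_j = x$, the set $A_m := \{j : y_j \in U_m\}$ belongs to $\mathcal{F}$, hence is nonempty ($\mathcal{F}$ being proper); choose $j_m \in A_m$. Then $y_{j_m} \in U_m$ with $U_m$ open, so by the definition of the ultralimit $B_m := \{i : x_{i,j_m} \in U_m\} \in \U$.

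Now I invoke countable incompleteness: fix $I = I_0 \supseteq I_1 \supseteq \cdots$ with $I_n \in \U$ and $\bigcap_n I_n = \emptyset$, and set $C_m := I_m \cap B_1 \cap \cdots \cap B_m$. Each $C_m \in \U$, the $C_m$ decrease, and $\bigcap_m C_m = \emptyset$. This emptiness is the crux: for every $i \in C_1$ there is a largest $m$ with $i \in C_m$, call it $m(i)$, and I define $j(i) := j_{m(i)}$ (setting $j(i) := j_1$ for the negligible $i \notin C_1$). For the final verification, given a neighborhood $U$ of $x$, choose $m_0$ with $U_{m_0} \subseteq U$; for $i \in C_{m_0}$ one has $m(i) \geq m_0$ and $i \in B_{m(i)}$, whence $x_{i,j(i)} = x_{i,j_{m(i)}} \in U_{m(i)} \subseteq U_{m_0} \subseteq U$. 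Thus $C_{m_0} \subseteq \{i : x_{i,j(i)} \in U\} \in \U$, giving $\lim_{i,\U} x_{i,j(i)} = x$.

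The main obstacle is that a single function $j$ must defeat all neighborhoods of $x$ simultaneously, whereas each $B_m$ only controls one level. The mechanism that resolves this is exactly the countable incompleteness: forcing $\bigcap_m C_m = \emptyset$ guarantees that the selected level $m(i)$ is finite for (almost) every $i$ yet tends to infinity along $\U$, since $\{i : m(i) \geq m_0\} \supseteq C_{m_0} \in \U$, so the diagonal choice lands in progressively smaller neighborhoods on a $\U$-large set. The only points I would double-check are the availability of an open neighborhood basis and the properness of $\mathcal{F}$, both of which legitimize the selections above and are routine.
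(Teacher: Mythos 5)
Your proof is correct. The paper in fact omits the proof of this proposition altogether, saying only that it is obtained ``by modifying the previous proof'', i.e.\ the argument that G\^{a}teaux smooth points of $\S_{\X^\U}$ are Fr\'echet smooth for superreflexive $\X$; so your write-up supplies the details the paper leaves out, and by a cleaner mechanism. In the paper's concrete argument the diagonal index $k_n$ is chosen via an approximate supremum of the scalars $f_{k,n}(x_n)$, a trick tied to the index set $\N$ and to the availability of numerical values, with countable incompleteness never mentioned (it is automatic for free ultrafilters on $\N$). In the abstract setting you rightly replace this with the standard device that isolates exactly where countable incompleteness enters: a decreasing chain $I_0 \supseteq I_1 \supseteq \cdots$ in $\U$ with $\bigcap_n I_n = \emptyset$, intersected with the level sets $B_m$, and the ``last level'' function $m(i)$, which forces $m(i) \ge m_0$ on the $\U$-large set $C_{m_0}$ and hence drives the diagonal sequence into every basic neighborhood $U_{m_0}$ of $x$. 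All your selections are legitimate: each $A_m \in \mathcal{F}$ is nonempty since filters are by convention proper, $B_m \in \U$ because $U_m$ is an open neighborhood of $y_{j_m}$, and first countability yields a decreasing open basis at $x$ by finite intersections, while compactness and Hausdorffness are used exactly once each (existence and uniqueness of the inner ultralimits $y_j$). Note also that your explicit use of the chain $(I_n)$ is not an artifact but the heart of the matter: for a countably complete ultrafilter the statement can fail --- take $K=[0,1]$, $x=0$, $\U$ principal at some $i_0$, $J=\N$ with $\mathcal{F}$ the Fr\'echet filter, and $x_{i_0 ,j}=1/j$, so that every choice $j(i_0)$ gives $\lim_{i,\U} x_{i,j(i)} = 1/j(i_0) \neq 0$.
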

\qed

\section{Resampling sequences}

Recall that a Banach space is $2\mathrm{R}$ (resp. $\mathrm{W}2\mathrm{R}$) if for each sequence $(x_n )\subset \B_\X$ with 
$\lim_{n,m \to\infty} \|x_n + x_m\|=2$ we have that $x_n \to x$ converges in the norm (resp. in the weak topology). 
The equivalent renormability of a Banach space with $\mathrm{W}2\mathrm{R}$ norm in fact characterizes reflexive spaces, see \cite{HaJo,OS}.
Also, a Banach space is reflexive if and only if it can be renormed in such a way that its bidual becomes weak-LUR, see \cite{tal}. 

We may consider graphs in Banach spaces $\X$, that is, a system $(\{x_i\}_i , \{L_{i,j} \}_{i,j} )$ of points $x_i \in \X$ and line segments 
$L_{i,j} = \conv (x_i , x_j ) \subset \X$ for some of the pairs of points $\{x_i , x_j\}$. Similarly as in graph theory, we call a graph in a Banach space complete 
if  there is a line segment between any two points. We relate to each graph the corresponding subset of the Banach space.  
We denote the complete graph $\Gamma_{\{x_i \}_i} = \bigcup \{\conv(x,y)\colon x,y \in \{x_i \}_i \}$ (considered a set).

Given a sequence $(x_n )\subset \B_\X$, we denote its `resampling ultraset' as follows: 
\[R_{(x_n )}:=\left\{(y_n )^\U \colon (y_n)=(x_{\pi (n)}),\  \pi \colon \N \to \N\ \mathrm{bijection}\right\} \subset \X^\U .\]
It is easy to see that this set depends on the particular representative $(x_n )\in (z_n )^\U$.
The question about the cardinality of the set can be approached by means of Rudin-Keisler ordering of ultrafilters.

\begin{proposition}\label{prop: incl}
Let $(x_n )\subset \B_\X$ be a sequence. Then $\lim_{n,m\to\infty} \|x_n +x_m \|=2$ if and only if $\Gamma_{R_{(x_n )}} \subset \S_{\X^\U}$.
\end{proposition}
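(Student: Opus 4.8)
The plan is to prove both implications, handling the converse by contraposition. For the forward direction, assume $\lim_{n,m\to\infty}\|x_n+x_m\|=2$. The first thing I would record is that every bijection $\pi\colon\N\to\N$ satisfies $\lim_{n,\U}\pi(n)=\infty$: for each fixed $K$ the set $\{n:\pi(n)\le K\}$ is finite, hence not a member of the free ultrafilter $\U$. Consequently, for any two bijections $\pi,\sigma$ and any $\varepsilon>0$, choosing $N$ with $\|x_i+x_j\|>2-\varepsilon$ whenever $i,j\ge N$, the set $\{n:\pi(n)\ge N\}\cap\{n:\sigma(n)\ge N\}$ lies in $\U$, so that $\lim_{n,\U}\|x_{\pi(n)}+x_{\sigma(n)}\|\ge 2-\varepsilon$. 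Letting $\varepsilon\to0$ gives $\|u+v\|=2$ for the points $u:=(x_{\pi(n)})^\U$ and $v:=(x_{\sigma(n)})^\U$ of $R_{(x_n)}$.

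From $\|u+v\|=2$ together with $\|u\|,\|v\|\le 1$ I obtain $\|u\|=\|v\|=1$ and $\|(u+v)/2\|=1$. A short convexity lemma then closes this direction: if $u,v\in\S_{\X^\U}$ and their midpoint lies on the sphere, then the convex function $t\mapsto\|tu+(1-t)v\|$ equals $1$ at $t=0,\tfrac12,1$ and is $\le1$ throughout $[0,1]$, hence is identically $1$, so $\conv(u,v)\subset\S_{\X^\U}$. Since $\Gamma_{R_{(x_n)}}$ is by definition the union of the segments $\conv(u,v)$ over $u,v\in R_{(x_n)}$, this yields $\Gamma_{R_{(x_n)}}\subset\S_{\X^\U}$.

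For the converse I argue contrapositively. Assuming $\lim_{n,m\to\infty}\|x_n+x_m\|\ne2$, I fix $\varepsilon>0$ and, from the negation of the double-limit condition, extract pairs $(p_k,q_k)$ with $\min(p_k,q_k)\to\infty$ and $\|x_{p_k}+x_{q_k}\|\le2-\varepsilon$; an inductive choice of ever-larger indices lets me arrange that the index sets $P=\{p_k\}$ and $Q=\{q_k\}$ consist of distinct entries (the degenerate case $p_k=q_k$, i.e. $\|x_{p_k}\|\le1-\varepsilon/2$, is handled identically using a single bijection). The crux is the combinatorial realization of these bad pairs inside $R_{(x_n)}$: I would pick an infinite and co-infinite $T\in\U$ — such a set exists because for any infinite co-infinite $A$ either $A$ or $\N\setminus A$ lies in $\U$ — enumerate $T=\{t_1<t_2<\cdots\}$, and define bijections $\pi,\sigma\colon\N\to\N$ by $\pi(t_k)=p_k$ and $\sigma(t_k)=q_k$, extending each arbitrarily to a bijection of the infinite complements $\N\setminus T\to\N\setminus P$ and $\N\setminus T\to\N\setminus Q$. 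Setting $u=(x_{\pi(n)})^\U$ and $v=(x_{\sigma(n)})^\U$, both in $R_{(x_n)}$, one gets $\|u+v\|=\lim_{n,\U}\|x_{\pi(n)}+x_{\sigma(n)}\|\le2-\varepsilon$ since $T\in\U$; hence the midpoint $(u+v)/2\in\Gamma_{R_{(x_n)}}$ has norm $\le1-\varepsilon/2<1$, so $\Gamma_{R_{(x_n)}}\not\subset\S_{\X^\U}$.

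The step I expect to be the main obstacle is precisely this last realization: ensuring that $\pi$ and $\sigma$ are honest bijections of all of $\N$ while simultaneously hitting the prescribed bad pairs on a $\U$-large set. This forces $T$ to be co-infinite — a member of $\U$ mapped bijectively onto the proper subset $P$ cannot exhaust $\N$ — and requires $P$ and $Q$ to be infinite and co-infinite so that the leftover indices can be matched up. Keeping these bookkeeping constraints mutually consistent, rather than any analytic subtlety, is where the care is needed.
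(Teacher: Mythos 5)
Your proof is correct and takes essentially the same route as the paper's: the forward direction via ultralimits of norms plus the midpoint-on-the-sphere convexity argument, and the converse by extracting a bad pair of index sequences and realizing them through two permutations that produce the bad pairs on a $\U$-large set of positions (the paper uses the even/odd split where you use a general infinite co-infinite $T\in\U$). If anything, your bookkeeping is more careful than the paper's, which silently assumes the extracted index sets are co-infinite so the permutations exist, and does not explicitly address the degenerate diagonal case $p_k=q_k$.
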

\begin{proof}
Fix $(x_n )\subset \B_\X$. Suppose $\lim_{n,m\to\infty} \|x_n +x_m \|=2$. Let $(y_n )$ and $(z_n )$ be sequences obtained by permuting $(x_n )$. 
Then $\lim_{n, \U} \|y_n + z_n\| =2$ according to the assumption, so that $\frac{y + z}{2} \in \S_{\X^\U}$, where
$y =  (y_n )^\U$ and $ z = (z_n )^\U $. By the convexity of $\B_{\X^\U}$ we get that $\conv (y, z) \subset \S_{\X^\U}$.
This proves the `only if' direction.

Next suppose that $\lim_{n,m\to\infty} \|x_n +x_m \|=2$ does not hold. Then we can extract subsequences $(n_j )$ and $(m_j )$ such that 
$\|x_{n_j} + x_{m_j }\| \to C <2$ as $j\to\infty$. Let $V$ and $W$ be the sets of even and odd natural numbers, respectively.  
Thus one of these set is in $\U$ and the other one is not, say, $W \notin \U \ni V$ without loss of generality (by permuting $\N$ suitably).
Fix permutations $\pi_a$ and $\pi_b$ of the natural numbers such that $\pi_a (2j) = (n_j )$, 
$\pi_b (2j ) = (m_j )$. Then $n_j = \pi_a \pi_{b}^{-1} (m_j )$ for each $j\in\N$. This means that 
for $(y_n )=(x_{\pi_a (n)})$ and $(z_n )=(x_{\pi_a (n)})$ we have 
$\lim_{n\to \infty} \|y_{2n} + z_{2n}\|=C$. Since $V \in \U$ we have $\lim_{n, \U} \|y_{n} + z_{n}\|=C$. Put 
$y=(y_n )^\U$ and $z=(z_n )^\U$. Then $y,z \in R_{(x_n )}$ but $\|\frac{y + z}{2}\| = \frac{C}{2}<1$. Therefore the 
complete graph of $R_{(x_n )}$ is not included in $\S_{\X^\U}$, showing the `if' direction.
\end{proof}

\begin{theorem}\label{thm: rotund_2R}
A Banach space $\X$ is $2\mathrm{R}$ if and only if for any 
$(x_n ) \subset \B_\X$ the inclusion $\Gamma_{R_{(x_n )}} \subset \S_{\X^\U}$  implies that $\Gamma_{R_{(x_n )}} $ is a singleton.
\end{theorem}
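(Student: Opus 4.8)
The plan is to combine Proposition \ref{prop: incl} with a single auxiliary observation: for a sequence $(x_n)\subset\B_\X$, the set $\Gamma_{R_{(x_n)}}$ is a singleton if and only if $(x_n)$ converges in norm. Granting this, both implications follow immediately. Indeed, by Proposition \ref{prop: incl} the hypothesis $\Gamma_{R_{(x_n)}}\subset\S_{\X^\U}$ is exactly the condition $\lim_{n,m\to\infty}\|x_n+x_m\|=2$ appearing in the definition of $2\mathrm{R}$; so the stated property of $\X$ says precisely that every such sequence has $\Gamma_{R_{(x_n)}}$ a singleton, which by the auxiliary observation is the same as saying that every such sequence converges in norm, i.e. that $\X$ is $2\mathrm{R}$.

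It remains to establish the auxiliary observation. First I would note that $\Gamma_{R_{(x_n)}}$ is a singleton if and only if $R_{(x_n)}$ itself is a singleton: one always has $R_{(x_n)}\subseteq\Gamma_{R_{(x_n)}}$ (take $y=z$ in the defining union), while any two distinct points $y\neq z$ of $R_{(x_n)}$ contribute the non-degenerate segment $\conv(y,z)$ to $\Gamma_{R_{(x_n)}}$. Thus it suffices to characterize when $R_{(x_n)}=\{(x_{\pi(n)})^\U:\pi\ \text{a bijection of}\ \N\}$ is a singleton.

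For the easy direction, suppose $x_n\to x$ in norm. For any bijection $\pi$ and any $\varepsilon>0$, the set $\{m:\|x_m-x\|<\varepsilon\}$ is cofinite, hence so is its preimage $\{n:\|x_{\pi(n)}-x\|<\varepsilon\}$ under $\pi$; since $\U$ is free it contains every cofinite set, so $\lim_{n,\U}\|x_{\pi(n)}-x\|=0$ and $(x_{\pi(n)})^\U=\j(x)$. Hence $R_{(x_n)}=\{\j(x)\}$ is a singleton.

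The harder direction, which I expect to be the main obstacle, is showing that if $(x_n)$ does not converge in norm then $R_{(x_n)}$ is not a singleton. Since $\X$ is complete, non-convergence means $(x_n)$ is not Cauchy, so there are $\varepsilon>0$ and subsequences $(x_{n_j}),(x_{m_j})$, which I would arrange to use pairwise distinct indices, with $\|x_{n_j}-x_{m_j}\|\geq\varepsilon$ for all $j$. Here I would reuse the permutation device from the proof of Proposition \ref{prop: incl}: writing $V$ and $W$ for the even and odd naturals and assuming without loss of generality $V\in\U$ and $W\notin\U$, choose bijections $\pi_a,\pi_b$ of $\N$ with $\pi_a(2j)=n_j$ and $\pi_b(2j)=m_j$. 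Then for $(y_n)=(x_{\pi_a(n)})$ and $(z_n)=(x_{\pi_b(n)})$ one has $\|y_n-z_n\|\geq\varepsilon$ on the set $V\in\U$, whence $\lim_{n,\U}\|y_n-z_n\|\geq\varepsilon>0$, so that $(y_n)^\U\neq(z_n)^\U$ are two distinct points of $R_{(x_n)}$. The only delicate point is the bookkeeping needed to extend the partial assignments $2j\mapsto n_j$ and $2j\mapsto m_j$ to genuine bijections of $\N$, using the distinctness of the indices to keep each assignment injective on the evens and completing it arbitrarily on the odds, exactly as in Proposition \ref{prop: incl}; everything else is routine.
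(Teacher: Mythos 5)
Your proof is correct, but in the crucial direction it takes a genuinely different and more elementary route than the paper's. The paper also begins with Proposition \ref{prop: incl} and handles the $2\mathrm{R}$-implies-singleton half exactly as you do (norm convergence makes every permuted sequence converge to the same limit, so $R_{(x_n)}$ collapses to $\{\j(x)\}$). For the converse, however, the paper argues by contradiction through Proposition \ref{prop: char} — a two-subsequence characterization of norm convergence whose proof rests on the Ramsey-type alternative of Lemma \ref{lm: alt} — extracting subsequences $(n_k),(n_m)$ admitting no further subsequences with $\|x_{n_{k_i}}-x_{n_{m_i}}\|\to 0$ and contradicting $\lim_{n,\U}\|y_n-z_n\|=0$. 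You instead isolate the clean equivalence ($\Gamma_{R_{(x_n)}}$ singleton $\iff$ $R_{(x_n)}$ singleton $\iff$ $(x_n)$ norm-convergent) and prove the hard half directly: non-convergence in a complete space means non-Cauchy, which hands you $\varepsilon$-separated pairs, and the even/odd permutation device from Proposition \ref{prop: incl} then exhibits two distinct points of $R_{(x_n)}$. This buys self-containedness (no Lemma \ref{lm: alt}, no Proposition \ref{prop: char}, whose full strength is not needed here) and makes transparent that the singleton characterization has nothing to do with the spherical hypothesis $\Gamma_{R_{(x_n)}}\subset \S_{\X^\U}$. It also quietly repairs a literal gap in the paper's own argument: there $y=(x_{n_k})^\U$ and $z=(x_{n_m})^\U$ are cosets of \emph{subsequences}, which as written need not belong to $R_{(x_n)}$ at all, since membership requires permutations of the full sequence; your insistence on extending the partial assignments $2j\mapsto n_j$, $2j\mapsto m_j$ to genuine bijections is exactly the needed fix. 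One bookkeeping point you gesture at should be said explicitly in a final write-up: completing $\pi_a$ to a bijection requires mapping the odds onto $\N\setminus\{n_j\colon j\in\N\}$, so that complement must be infinite — arrange this by thinning the sequence of pairs before defining the permutations. What the paper's longer detour buys is Proposition \ref{prop: char} and Lemma \ref{lm: alt} as standalone results; for Theorem \ref{thm: rotund_2R} itself, your route is shorter and tighter.
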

This characterization of $2\mathrm{R}$ is clearly some kind of rotundity condition on $\B_{\X^\U}$ and it is `local' in the sense that 
it involves only a single countable subset of $\X$, one at a time. 

The following auxiliary tool appears to have Ramsey theoretic flavor.
\begin{lemma}\label{lm: alt}
Let $(X,d)$ be a metric space and $(x_n ) \subset X$ a sequence. Then exactly one of the following conditions hold:
\begin{enumerate}
\item There is $\alpha>0$ and a subsequence $(x_{n_j})$ such that $d(x_{n_{j_1}},x_{n_{j_2}})>\alpha$ for each $j_1 \neq j_2$.
\item Any subsequence $(n_{j}) \subset \N$ contains a further subsequence $(n_{j_i})_i \subset (n_j)$ such that  $(x_{n_{j_i}})_i$ is Cauchy.
\end{enumerate}
\end{lemma}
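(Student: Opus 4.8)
The plan is to show the two alternatives are mutually exclusive and jointly exhaustive. Mutual exclusivity is immediate: if condition~(1) holds with an $\alpha$-separated subsequence $(x_{n_j})$, then $(n_j)$ is itself a subsequence of $\N$ none of whose infinite sub-subsequences can be Cauchy, since any two distinct terms stay at distance $>\alpha$. Taking this very $(n_j)$ therefore violates condition~(2), so at most one alternative can occur.

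For the remaining direction I would assume that condition~(1) fails and derive condition~(2). The first observation is that the failure of~(1) is hereditary: any $\alpha$-separated subsequence of a subsequence of $(x_n)$ is an $\alpha$-separated subsequence of $(x_n)$ itself. Hence, if~(1) fails for $(x_n)$ it fails for every subsequence, and it suffices to prove the following reduced statement, after which~(2) follows by applying it to an arbitrary $(n_j)$: \emph{if $(x_n)$ admits no $\alpha$-separated infinite subsequence for any $\alpha>0$, then $(x_n)$ has a Cauchy subsequence.}

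The construction I would use exploits the infinite Ramsey theorem, which accounts for the Ramsey-theoretic flavor. Fixing the thresholds $1/k$ for $k\in\N$, I would build a nested chain of infinite index sets $\N \supseteq A_1 \supseteq A_2 \supseteq \cdots$ as follows. Given $A_{k-1}$ (with $A_0=\N$), two-color each pair $\{i,j\}\subseteq A_{k-1}$ according to whether $d(x_i,x_j)>1/k$ or $d(x_i,x_j)\le 1/k$, and let $A_k\subseteq A_{k-1}$ be an infinite homogeneous set furnished by Ramsey's theorem. A homogeneous set of the first color would be a $(1/k)$-separated infinite subsequence, which the failure of~(1) forbids; hence $A_k$ is homogeneous of the second color, so that $d(x_i,x_j)\le 1/k$ for all $i,j\in A_k$. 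Finally I would diagonalize, choosing indices $m_1<m_2<\cdots$ with $m_k\in A_k$: for any $k,l\ge K$ both $m_k,m_l$ lie in $A_K$, whence $d(x_{m_k},x_{m_l})\le 1/K$, and $(x_{m_k})$ is Cauchy.

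The only point requiring genuine care is the heredity step together with the correct reading of the negation of~(1): ``(1) fails'' must mean the absence of an $\alpha$-separated infinite subsequence for \emph{every} $\alpha>0$ simultaneously, and it is precisely this uniform statement that licenses excluding the ``far'' homogeneous color at each Ramsey stage. Everything else is routine bookkeeping in the diagonalization. I note in passing that one could replace the iterated Ramsey argument by the observation that the hypothesis forces $\{x_n\}$ to be totally bounded — for each $\alpha$ a maximal $\alpha$-separated subset must be finite, and the $\alpha$-balls about its members cover the sequence — whence a Cauchy subsequence is extracted by the usual nested-cover argument; but the Ramsey version is cleaner to state.
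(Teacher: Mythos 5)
Your proof is correct, but it takes a genuinely different route from the paper's. You reduce, via the heredity of the negation of condition~(1) under passing to subsequences, to the statement that a sequence with no $\alpha$-separated infinite subsequence for any $\alpha>0$ admits a Cauchy subsequence, and you prove this by iterating the infinite Ramsey theorem: at stage $k$ you two-color pairs of indices by whether $d(x_i,x_j)>1/k$, observe that an infinite homogeneous set of the ``far'' color is exactly a $(1/k)$-separated subsequence and hence forbidden, nest the resulting ``close'' homogeneous sets $A_1\supseteq A_2\supseteq\cdots$, and diagonalize with $m_k\in A_k$. The paper never actually invokes Ramsey's theorem (despite remarking that the lemma ``appears to have Ramsey theoretic flavor''); instead it introduces the ad hoc notion of an $\alpha$-Crudely Cauchy sequence, one with $\limsup_{n\to\infty}\limsup_{k\to\infty}d(y_n,y_{n+k})<\alpha$, and splits into cases according to whether every subsequence admits $\alpha$-Crudely Cauchy refinements for all $\alpha>0$: in the affirmative case a diagonal argument over the thresholds $\alpha_n=1/n$ yields the Cauchy subsequences of condition~(2), while in the negative case a separated subsequence with constant $\alpha_0/2$ is built directly, term by term. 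So the paper's dichotomy pivots on the crude-Cauchy property and constructs the separated sequence by hand, whereas yours pivots on separation and constructs the Cauchy sequence; your version is shorter and delegates the combinatorial core to a standard theorem, while the paper's is self-contained at the cost of more delicate bookkeeping. Your parenthetical alternative --- that the hypothesis forces $\{x_n\}$ to be totally bounded, since a maximal $\alpha$-separated subset must be finite and the $\alpha$-balls about its members cover the sequence --- is also sound and is arguably the most elementary of the three arguments; any of them suffices for the use of the lemma in Proposition~\ref{prop: char}.
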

\begin{proof}
Clearly the above conditions are mutually exclusive.

We call a sequence $(y_n) \subset X$ $\alpha$-Crudely Cauchy, $\alpha>0$,  if 
\[\limsup_{n\to\infty} \limsup_{k \to\infty} d(y_n , y_{n+k}) < \alpha.\]
By a diagonal argument we obtain that  $(y_n )$ contains a Cauchy subsequence if it is $\alpha$-Crudely Cauchy for every $\alpha>0$. 
We proceed in $2$ cases. 

First assume that for each subsequence $(x_{n_j})$ and $\alpha>0$ there is a further subsequence $(x_{n_{j_i}})$ which is 
$\alpha$-Crudely Cauchy. Put $\alpha_n = \frac{1}{n}$ for $n\in\N$. Let $(n_{j})$ be as in condition 2.
According to the above assumption we obtain that $(x_{n_{j}})$ contains a subsequence 
which is $\alpha_1$-Crudely Cauchy. Moreover, from each $\alpha_n$-Crudely Cauchy subsequence we may 
pass on to a further subsequence which is $\alpha_{n+1}$-Crudely Cauchy.  Then a standard diagonal argument yields a subsequence 
$(z_k) \subset (x_n)$ which is $\alpha_n$-Crudely Cauchy for all $n\in\N$. That is, $(z_k ) \subset (x_{n_{j}})$ contains a Cauchy 
subsequence.

Assume next that there is a subsequence $(x_{n_j})$ and $\alpha_0 >0$ such that no further subsequence of $(x_{n_j})$ is $\alpha_0$-Crudely Cauchy. 
This means that there is a subsequence $(x_{n_{j_k}})$ such that $d(x_{n_{j_1}} , x_{n_{j_k}})>\frac{\alpha_0}{2}$ for each $k >1$. 
Because of the assumption we may extract a further subsequence $(x_{n_{j_{k_\ell}}})$ such that $x_{n_{j_{k_1}}}=x_{n_{j_1}} $ and 
$d(x_{n_{j_{k_2}}}, x_{n_{j_{k_\ell}}})>\frac{\alpha_0}{2}$ for $\ell>2$. Proceeding in this manner by a standard diagonal argument 
we obtain a subsequence $(z_n)$ which satisfies condition 1. for $\alpha=\frac{\alpha_0}{2}$.
\end{proof}

\begin{proposition}\label{prop: char}
Let $\X$ be a Banach space and $(x_n )\subset \B_\X$ a sequence. 
Then $(x_n )$ is norm-convergent (resp. weak-star convergent in case $\X$ is a dual space) if and only if for each pair of subsequences $(n_k ), (n_m) \subset \N$ 
there are further subsequences $(n_{k_i} ) \subset (n_k )$ and $(n_{m_i}) \subset (n_m)$ such that 
$x_{n_{k_i}} - x_{n_{m_i} } \to 0$ in norm topology (resp. weak-star topology) as $i\to\infty$.
\end{proposition}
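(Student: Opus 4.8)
The plan is to prove the two implications separately, treating the `only if' direction as immediate and reducing the substance of the `if' direction to Lemma \ref{lm: alt} in the norm case and to the scalar instance of the proposition in the weak-star case. For the `only if' direction, if $(x_n)$ is norm-convergent (resp.\ weak-star convergent) to some $x$, then every subsequence converges to the same $x$, so given any pair $(n_k),(n_m)$ one simply takes the further subsequences to be the originals and obtains $x_{n_k}-x_{n_m}\to 0$ in the relevant topology; no genuine extraction is required.

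For the `if' direction in the norm case I would argue by contraposition, assuming $(x_n)$ is not norm-convergent and feeding it into Lemma \ref{lm: alt} with $d$ the norm metric. In the first alternative there are $\alpha>0$ and an $\alpha$-separated subsequence $(x_{n_j})$. Splitting the index sequence $(n_j)$ into its even- and odd-positioned parts $(p_i)=(n_{2i})$ and $(q_i)=(n_{2i-1})$ produces two subsequences whose index sets are disjoint; consequently, for the pair $(p_i),(q_i)$ and \emph{any} further subsequences $(n_{k_i}),(n_{m_i})$ matched by a common running index, the terms $x_{n_{k_i}}$ and $x_{n_{m_i}}$ are distinct members of the separated set, so $\|x_{n_{k_i}}-x_{n_{m_i}}\|>\alpha$ and the differences cannot tend to $0$. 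Thus this pair witnesses the failure of the extraction condition.

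In the second alternative every subsequence of $(x_n)$ has a Cauchy, hence (by completeness of $\X$) norm-convergent, further subsequence. Since $(x_n)$ itself does not converge, a short argument yields two distinct subsequential limits $x\neq y$: if every subsequential limit were equal to a single $x$, then any subsequence staying $\varepsilon$-away from $x$ would still have a convergent further subsequence with limit $\neq x$, a contradiction, forcing $(x_n)\to x$. Taking subsequences converging to $x$ and to $y$ as the pair, every choice of further subsequences gives differences tending to $x-y\neq 0$, again violating the extraction condition. This completes the norm case, and in particular settles the scalar case $\X=\R$.

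Finally I would deduce the weak-star case from this scalar case. Writing $\X=\Y^*$ and fixing $y\in\Y$, the bounded scalar sequence $s_n=\langle x_n,y\rangle$ inherits the extraction condition from $(x_n)$, since applying $y$ to any weak-star-null difference $x_{n_{k_i}}-x_{n_{m_i}}$ gives $s_{n_{k_i}}-s_{n_{m_i}}\to 0$. Hence each $(s_n)$ converges; putting $\phi(y)=\lim_n\langle x_n,y\rangle$ defines a linear functional with $\|\phi\|\le 1$, so $\phi\in\Y^*=\X$ and $x_n\to\phi$ weak-star by definition. The step I expect to be the main obstacle is exactly this one: the weak-star topology on $\B_\X$ need not be metrizable, so Lemma \ref{lm: alt} does not apply to it directly, and the device of testing against each $y\in\Y$ to reduce to the already-proven scalar case is what circumvents the non-metrizability.
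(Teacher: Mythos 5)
Your proof is correct, and it splits against the paper's in an interesting way. The norm case is essentially the paper's argument: the paper also runs it through Lemma \ref{lm: alt}, handling the separated alternative by splitting the separated subsequence into two disjoint halves (evens and odds, exactly as you do), and the Cauchy alternative via completeness; the only cosmetic difference is that in the second alternative the paper fixes the limit $z$ of one Cauchy subsequence and pairs it against a subsequence kept at distance greater than $b/2$ from $z$, whereas you extract two distinct subsequential limits $x\neq y$ --- the same idea in different clothing. Where you genuinely diverge is the weak-star case. The paper argues contrapositively: by weak-star compactness of the dual ball a non-weak-star-convergent sequence has two distinct weak-star cluster points (using that a sequence in a compact space with a unique cluster point converges), it separates them by an element of the predual, and the two subsequences living in the resulting disjoint slabs form a pair admitting no further subsequences with weak-star-null difference. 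You instead prove the implication directly: test against each predual element $y$, invoke the already-established scalar case to get convergence of $\langle x_n ,y\rangle$, and assemble the limits into a functional of norm at most $1$, which therefore lies in $\X$ and is the weak-star limit. Both routes are sound, and both correctly sidestep the non-metrizability of the weak-star topology on $\B_\X$, which, as you note, blocks a direct application of Lemma \ref{lm: alt}: the paper does it with compactness and cluster points, at the price of the small auxiliary fact about unique cluster points; your reduction buys a constructive identification of the limit and needs nothing beyond scalar Bolzano--Weierstrass, at the price of the (easy) duality bookkeeping. One pedantic remark: the scalar sequences $s_n = \langle x_n , y\rangle$ are bounded by $\|y\|$ rather than lying in $[-1,1]$, so formally you should rescale before citing the proposition for $\R$; this is harmless, since the norm-case argument never actually uses the bound.
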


The analogous result does not typically hold for the weak topology in the non-reflexive case.

\begin{proof}
The `only if' direction is clear for both topologies. 

To prove the `if' part for the weak-star topology, assume that $(x_{n}^* )\subset \B_{\X^*}$ is not weak-star convergent. 
By the weak-star compactness of the dual unit ball we obtain $2$ weak-star cluster points, say $z^{*} \neq y^{*}$. Indeed, if there were 
only weak-star cluster point, then $(x_{n}^* )$ would weak-star converge in the first place. These cluster points can be separated by 
a functional $x \in \X$ in such a way, say, that $0 \leq x(z^{*} ) \leq 1$ and $x(y^{*}) =a>1$, without loss of generality. 
Note that 
\[z^{*} \notin \overline{\left\{x_{n}^* \colon x(x_{n}^* )> \frac{2a+1}{3}\right\}}^{\omega^*} \ni y^{*} ,\quad 
y^{*} \notin \overline{\left\{x_{n}^* \colon x(x_{n}^* )< \frac{a+2}{3}\right\}}^{\omega^*} \ni z^{*} .\]
Then we select subsequences $(x_{n_k}^* )$ and $(x_{n_m}^* )$ according to the above sets. It is now clear that there are no further subsequences 
with $x_{n_{k_i}}^*  - x_{n_{m_i} }^* \to 0$ in the weak-star topology. 
  
To prove the `if' part for the norm topology, we will use the alternative provided by Lemma \ref{lm: alt}. First suppose that there is 
a subsequence $(z_n ) \subset (x_n)$ which is a discrete set, as in the Lemma. Then we define $(x_{n_k})_k = (z_{2k})_k$ and $(x_{n_m})_m = (z_{2m+1})_m$.
Then the assumption of the statement of the proposition is not valid. 

Consequently, we are required to only study the case where each subsequence of $(x_n)$ contains a Cauchy sequence. Let 
$(x_{n_m} ) \subset (x_n )$ be a Cauchy sequence. The $x_{n_m} \to z$ in the norm topology as $m\to\infty$. Suppose that $x_n \not\to z$, then 
$\limsup_{n\to\infty} \|x_n - z\| =b>0$. Then we may isolate a subsequence $(x_{n_k} ) \subset (x_n)$ such that 
$\|x_{n_k} - z\|> \frac{b}{2}$ for each $k\in\N$. Therefore it is not possible to isolate further subsequences $(n_{m_i})$ and $(n_{k_i})$ 
such that $\lim_{i\to\infty} \|x_{n_{k_i}} - x_{n_{m_i} } \|=0$ holds.
\end{proof}

\begin{proof}[Proof of Theorem \ref{thm: rotund_2R}]
Fix $(x_n ) \subset \B_\X$. Assume $\Gamma_{R_{(x_n )}}\subset \S_{\X^\U}$, or equivalently 
$\lim_{n,m\to\infty} \|x_n + x_m\|=2$, according to Proposition \ref{prop: incl}. Clearly $\|x_n \| \to 1$.
Assume that $\X$ is $2\mathrm{R}$. Then there is $x \in \S_\X$ such that $x_n \to x$ in the norm. Note that 
every permutation of $(x_n )$ converges in norm to $x$ as well. This means that $R_{(x_n )}$ consists of one element only, namely 
$(x,x,x,\ldots )^\U$. 

Next assume that the condition $2\mathrm{R}$ of $\X$ fails and that $(x_n )$ does not converge in the norm. 
Assume to the contrary that $\Gamma_{R_{(x_n )}}$ is a singleton.
Then according to Proposition \ref{prop: char} there are subsequences 
$(n_k),(n_m) \subset \N$ such that there do not exists further subsequences $(n_{k_i})$, $(n_{m_i})$ with 
$\|x_{n_{k_i}} - x_{n_{m_i} }\| \to 0$. 

Let $(y_k)=(x_{n_k})$ and $(z_m)=(x_{n_m})$. Put $y = (y_k)^\U$ and $z = (z_m)^\U$. Observe that 
\[\lim_{n, \U} \|y_n + z_n\|=\lim_{n,m\to \infty} \|x_n + x_m\|=2,\]
\[\lim_{n, \U} \|y_n\| =1,\quad \lim_{n, \U} \|z_n\| =1 .\]
According to the assumption that $\Gamma_{R_{(x_n )}}$ is a singleton we obtain that $z=y$. Thus 
\[\lim_{n, \U} \|y_n -z_n\|=0.\]
This means that there is a subsequence $(n_i) \subset \N$ such that $\|y_{n_i} - z_{n_i} \|\to 0$ as $i\to\infty$. This can be rephrased as follows:
$\|x_{n_{k_i}} - x_{n_{m_i}} \|\to 0$ as $i\to\infty$, which contradicts the selection of the sequences $(n_k)$ and $(n_m)$. 
This completes the proof.
\end{proof}

\subsection*{Acknowledgments}
The author is grateful to Petr H\'ajek for inspiring discussions on the topic.
This research was financially supported by the Academy of Finland Project \#268009, the Finnish Cultural Foundation and V\"{a}is\"{a}l\"{a} Foundation.


\begin{thebibliography}{99}
\bibitem{BT}
T. Barton, R.M. Timoney, Weak$^*$-continuity of Jordan triple products and applications,
Math. Scand., 59 (1986), 177--191.
\bibitem{Beauzamy}
B. Beauzamy, Introduction to Banach Spaces and Their Geometry, Mathematics Studies 68, Horth-Holland 1985.
\bibitem{BecerraMartin} J. Becerra Guerrero, M. Mart\'in, The Daugavet property of $C^*$-algebras, $JB^*$-triples, and of their isometric preduals. 
J. Funct. Anal. 224 (2005), 316--337.
\bibitem{FA_book} M. Fabian, P.  Habala, P. H\'ajek, V. Montesinos Santalucía, J. Pelant, V. Zizler, 
Functional analysis and infinite-dimensional geometry. CMS Books in Mathematics/Ouvrages de Mathématiques de la SMC, 8. Springer-Verlag, New York, 2001.
\bibitem{Folner}
E. F\o lner, On groups with full Banach mean value, Math. Scand. 3 (1955), 243--254.
\bibitem{HaJo}
H\'ajek, Petr; Johanis, Michal, Characterization of reflexivity by equivalent renorming. J. Funct. Anal. 211 (2004), no. 1, 163--172.
\bibitem{Heinrich} S. Heinrich, Ultraproducts in Banach space theory, J. Reine Angew. Math. 313 (1980), 72--104.
\bibitem{Handbook}
Eds. W.B. Johnson, J. Lindenstrauss, Handbook of the Geometry of Banach Spaces, Vol 1, Elsevier (2001).
\bibitem{Lima}
A. Lima, M-ideals of compact operators in classical Banach spaces, 
Math. Scand. 44 (1979), 207--217.
\bibitem{Los}
J. $\L$o\'{s}, Quelques remarques, th\'{e}or\`{e}mes et probl\`{e}mes sur les classes d\'{e}finissables d'alg\`{e}bres. 
Mathematical interpretation of formal systems. North-Holland Publishing Co., Amsterdam, 1955.
\bibitem{OS}
E. Odell, Th. Schlumprecht, Asymptotic Properties of Banach Spaces under Renormings, J. Amer. Math. Soc. 11, (1998), 175--188. 
\bibitem{sullivan}
F. Sullivan, Geometrical properties determined by the higher duals of a Banach space, Illinois J. Math. 21 (1977), 315--331.
\bibitem{tal}
J. Talponen, On weakly extremal structures in Banach spaces. Extracta Math. 22 (2007), 215--223.
\bibitem{Werner}
D. Werner, New classes of Banach spaces which are M-ideals in their biduals

\end{thebibliography}
\end{document}